
\documentclass[draft]{amsart}

\usepackage{amssymb,graphicx,enumerate,amsthm}
\usepackage{multirow}

\textwidth=7in \textheight=220mm \oddsidemargin=-0.25in
\evensidemargin=-0.25in \topmargin=-0in

\newtheorem{theorem}{Theorem}[section]

\renewcommand{\mod}[1]{\text{ (mod }#1)}

\newcommand{\jacobi}[2]{\left( \frac{#1}{#2} \right)}

\numberwithin{equation}{section}

\linespread{1.1} 
 
\begin{document}

\makeatletter
\def\imod#1{\allowbreak\mkern10mu({\operator@font mod}\,\,#1)}
\makeatother

\author{Alexander Berkovich}
   	\address{Department of Mathematics, University of Florida, 358 Little Hall, Gainesville FL 32611, USA}
   	\email{alexb@ufl.edu}




\title[\scalebox{.9}{On the $q$-binomial identities involving the Legendre symbol modulo $3$.}]{On the $q$-binomial identities involving the Legendre symbol modulo $3$.}

\begin{abstract} 

I use polynomial analogue of the Jacobi triple product identity together with the Eisenstein formula for the Legendre symbol modulo $3$ 
to prove six identities involving the $q$-binomial coefficients. These identities are then extended to the new infinite hierarchies of $q$-series identities 
by means of the special case of Bailey's lemma. Some of the identities of Ramanujan, Slater, McLaughlin and Sills are obtained this way.
\end{abstract}

\keywords{Eisenstein formula for the Legendre symbol, $q$-binomial identities, infinite hierarchies of $q$-series identities, Bailey's lemma}
  
\subjclass[2010]{Primary 11B65; Secondary 11C08, 11P81, 11P82, 11P83, 11P84,  05A10, 05A15, 05A17}


\date{\today}
   
   
\maketitle

\section{Introduction}\label{Sec:Intro}

\hskip 0.05in

Let $a$ and $q$ be variables and define the $q$-Pochhammer symbol $(a;q)_n :=(1-a)(1-aq)\dots(1-aq^{n-1})$ for any non-negative integer $n$. For $|q|<1$, we define $(a;q)_\infty := \lim_{n\rightarrow\infty} (a;q)_n$. We define the shorthand notation $(a_1,a_2,\dots,a_k;q)_n :=(a_1;q)_n(a_2;q)_n\dots (a_k;q)_n$. Finally note that $1/(q;q)_n = 0$ for all negative $n$.\\

Next, we define the $q$-binomial coefficients
\begin{align*}
{m+n \brack m}_q &:= \left\lbrace \begin{array}{ll}\frac{(q;q)_{m+n}}{(q;q)_m(q;q)_{n}},&\text{for }m, n \geq 0,\\
   0,&\text{otherwise,}\end{array}\right.,
\end{align*}
where $m,n$ are non-negative integers. We would also require the $q$-binomial recurrences [\cite{GR}, I.45, p.353]
\begin{equation}
{m+n \brack m}_q = {m+n-1 \brack m}_q + q^{n} {m+n-1 \brack m-1}_q.
\label{eq:Binom_rec}
\end{equation}
The following two limits are well known.
For any $j\in \mathbb{Z}_{\geq0}$ and $a =0$ or 1,
\begin{align}
\nonumber
 \displaystyle \lim_{L\rightarrow\infty}{L\brack j}_q &= \frac{1}{(q;q)_j},\\
  \nonumber
  \displaystyle \lim_{L\rightarrow\infty}{2L+a\brack L-j}_q &= \frac{1}{(q;q)_\infty}.
\end{align}
In \cite{BU} we proved many $q$-binomial identities involving the Legendre symbol $\mod 3$ 
\begin{align}
\jacobi{j}{3} &= 
\left\lbrace \begin{array}{lll} 
 1,&\text{ if } j\equiv 1 \mod 3,\\
-1,&\text{ if } j\equiv -1 \mod 3,\\  
 0,&\text{ if } j\equiv 0 \mod 3.
\end{array}\right.
\label{}
\end{align}
In particular, we established
\begin{theorem}\label{T1.3}[{Berkovich, Uncu} \cite{BU}] 
Let $L\in \mathbf Z_{\geq 0}$, then
\begin{equation}
\sum_{m,n\geq 0}\frac{q^{2m^2+6mn+6n^2}(q;q)_L}{(q;q)_{L-3n-2m}(q;q)_m(q^3;q^3)_n}=\sum_{j=-L}^L \jacobi{j+1}{3} q^{j^2}{2L\brack L-j}_q.
\label{1.3}
\end{equation}
\end{theorem}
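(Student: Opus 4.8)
The plan is to prove (\ref{1.3}) in two stages. First I would strip the Legendre symbol from the right‑hand side using the Eisenstein formula for the quadratic residue symbol modulo $3$, which in terms of a primitive cube root of unity $\omega=e^{2\pi i/3}$ reads $\jacobi{k}{3}=(\omega^{k}-\omega^{-k})/(\omega-\omega^{-1})$. Substituting $k=j+1$ and separating the factors $\omega^{\pm1}$,
\[
\sum_{j=-L}^{L}\jacobi{j+1}{3}q^{j^2}{2L\brack L-j}_q=\frac{1}{\omega-\omega^{-1}}\Bigl(\omega\sum_{j}\omega^{j}q^{j^2}{2L\brack L-j}_q-\omega^{-1}\sum_{j}\omega^{-j}q^{j^2}{2L\brack L-j}_q\Bigr);
\]
since ${2L\brack L-j}_q={2L\brack L+j}_q$, the substitution $j\mapsto-j$ identifies the two inner sums, so the right‑hand side collapses to the single, manifestly real, theta‑type polynomial $S_L:=\sum_{j=-L}^{L}\omega^{j}q^{j^2}{2L\brack L-j}_q$ (the $\pm j$ terms combine with coefficient $\omega^{j}+\omega^{-j}\in\{2,-1\}$). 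It then suffices to prove that $S_L$ equals the left‑hand side of (\ref{1.3}).

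For the second stage I would evaluate $S_L$ by means of a polynomial analogue of the Jacobi triple product identity — a finite‑$L$ refinement of $\sum_{j}(-1)^jz^jq^{\binom{j}{2}}=(q;q)_\infty(z;q)_\infty(q/z;q)_\infty$ built from the $q$‑binomial coefficients ${2L\brack L-j}_q$ — specialized so that its parameter becomes a power of $q$ times $\omega$ (which is what converts the $q^{\binom{j}{2}}$‑data into the $q^{j^2}$ appearing here). The base‑$q^{3}$ Pochhammer symbols $(q^{3};q^{3})_n$ on the left of (\ref{1.3}) should then be manufactured from the product side of this identity via the cyclotomic factorization $(1-x)(1-\omega x)(1-\omega^{2}x)=1-x^{3}$ applied to the $q$‑shifted factors, while the inner sum over $m$, with quadratic exponent $2m^{2}+6mn$, emerges on expanding the remaining finite product by the $q$‑binomial theorem $\sum_{m}q^{\binom{m}{2}}z^{m}{N\brack m}_q=(-z;q)_N$; the outer sum over $n$ carrying the weight $q^{6n^{2}}/(q^{3};q^{3})_n$ is the leftover. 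If this direct route becomes unwieldy, a robust alternative is induction on $L$: both $S_L$ and the double sum are polynomials in $q$ agreeing for $L=0,1$, and iterating the recurrence (\ref{eq:Binom_rec}) on ${2L\brack L-j}_q$ produces a recurrence for $S_L$ that is matched by the telescoping of $(q;q)_L/(q;q)_{L-3n-2m}$ on the other side.

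I expect the main obstacle to be the second stage: selecting the right specialization and chain of variable changes in the polynomial Jacobi triple product so that the base‑$q$ binomial coefficient on the right of (\ref{1.3}) and the base‑$q^{3}$ Pochhammer symbol on the left are reconciled, and keeping the quadratic‑form bookkeeping straight — in effect matching $2m^{2}+6mn+6n^{2}=2\bigl((m+n)^{2}+(m+n)n+n^{2}\bigr)$ with the exponents $j^{2}$ delivered by the triple product. By contrast the Eisenstein reduction is entirely routine, and once the correct polynomial identity is in hand the remaining work is bookkeeping with $q$‑Pochhammer symbols.
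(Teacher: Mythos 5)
You should first note that the present paper does not prove Theorem~\ref{T1.3} at all: it is quoted from \cite{BU}, so there is no internal proof to compare against, and your proposal has to stand on its own. Your first stage does stand: with $\omega=e^{2\pi i/3}$, writing $\jacobi{j+1}{3}=(\omega^{j+1}-\omega^{-j-1})/(\omega-\omega^{-1})$ and using ${2L\brack L-j}_q={2L\brack L+j}_q$ to identify the two conjugate sums correctly collapses the right-hand side of \eqref{1.3} to $S_L=\sum_{j=-L}^{L}\omega^{j}q^{j^2}{2L\brack L-j}_q$. This is exactly the Eisenstein-formula manoeuvre the paper uses for its six seed identities in Section~2.

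The second stage, however, has a gap that is structural rather than a matter of bookkeeping: $S_L$ cannot be evaluated by any specialization of the polynomial Jacobi triple product. The finite triple product exists in precisely two calibrations — base $q$ with exponent $\binom{j}{2}$, namely
\begin{equation*}
\sum_{j=-n}^{m}(-1)^{j}q^{\binom{j}{2}}w^{j}{m+n\brack n+j}_q=(w;q)_m(q/w;q)_n,
\end{equation*}
or, after $q\to q^{2}$, base $q^{2}$ with exponent $j^{2}$ as in \eqref{1.4} — and $S_L$ matches neither, since it pairs the exponent $j^{2}$ with a base-$q$ binomial coefficient; no choice of $w$ (roots of unity, powers of $q$, or products thereof) repairs this mismatch. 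More decisively, every specialization of the finite triple product is a product of $q$-Pochhammer factors and hence a product of cyclotomic polynomials, whereas $S_L$ is not: already $S_2=1+q^{2}-q^{4}$, whose zeros are not roots of unity, so $S_L$ admits no product evaluation at all. This is consistent with the left side of \eqref{1.3} being an irreducible finite double sum — the polynomial Capparelli sum — which only becomes a product in the limit $L\to\infty$, and then via the \emph{quintuple} product \eqref{cpi}, not the triple product. So the cyclotomic factorization $(1-x)(1-\omega x)(1-\omega^{2}x)=1-x^{3}$ and the $q$-binomial-theorem expansion you describe cannot manufacture the left-hand side. Your fallback, induction on $L$ via \eqref{eq:Binom_rec}, is the viable route (and is in the spirit of how such polynomial identities are established in \cite{BU}), but as written it is only a plan: the recurrence satisfied by $S_L$ is not first order, agreement at $L=0,1$ is therefore not sufficient, and the verification that the double sum satisfies the same higher-order recurrence — the actual substance of the proof — is not supplied.
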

\noindent
In the limit $L\rightarrow\infty$ we have
\begin{equation}
\sum_{m,n\geq 0}\frac{q^{2m^2+6mn+6n^2}}{(q;q)_m(q^3;q^3)_n}=\frac{Q(q^6,-q)}{(q)_\infty},
\label{cpi}
\end{equation}
where
\begin{equation}
Q(q,z):=(q,-z,-\frac{q}{z};q)_\infty(\frac{q}{z^2},z^2q;q^2)_\infty.
\label{capid}
\end{equation}
In deriving \eqref{cpi} we used 
\begin{theorem}\label{T1.6}{Quintuple product identity} [\cite{GR}, ex 5.6,p 147]\\
For $0<|q|<1$ and $z\neq 0$, 
\begin{equation}
\sum_{k=-\infty}^\infty(-1)^k q^{\frac{3k^2-k}{2}}z^{3k}(1+zq^k)=Q(q,z).
\label{qpi}
\end{equation}
\end{theorem}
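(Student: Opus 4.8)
The plan is to establish \eqref{qpi} by the classical ``functional equation in $z$'' argument, rather than by a further rearrangement of products. Write $F(q,z):=\sum_{k=-\infty}^{\infty}(-1)^{k}q^{(3k^{2}-k)/2}z^{3k}(1+zq^{k})$ for the left-hand side of \eqref{qpi}. Because the exponent of $q$ grows quadratically in $k$, this series converges absolutely and locally uniformly on $\mathbb{C}^{*}=\mathbb{C}\setminus\{0\}$, so $F(q,z)$ is holomorphic there, and $Q(q,z)$, being a product of finitely many convergent infinite $q$-products, is holomorphic on $\mathbb{C}^{*}$ as well. Hence both sides of \eqref{qpi} have Laurent expansions $\sum_{n\in\mathbb{Z}}c_{n}(q)z^{n}$ on $\mathbb{C}^{*}$, and it suffices to prove that these Laurent series coincide.

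The first step records two functional equations for the right-hand side. Applying the elementary identity $(a;q)_{\infty}=(1-a)(aq;q)_{\infty}$, together with its $q^{2}$-analogue, to each of the five Pochhammer factors in \eqref{capid} and simplifying, one obtains
\begin{equation}
Q(q,qz)=-\frac{1}{qz^{3}}\,Q(q,z),\qquad Q(q,1/z)=\frac{1}{z}\,Q(q,z).
\label{plan:feqQPI}
\end{equation}
A routine reindexing of $F(q,z)$ --- sending $k\mapsto k+1$ in each of its two summands for the first relation, and $k\mapsto-k$ for the second --- shows that $F(q,z)$ satisfies exactly the same pair of relations.

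The second step extracts what \eqref{plan:feqQPI} imposes on the Laurent coefficients. The first relation forces $c_{n+3}(q)=-q^{n+1}c_{n}(q)$, so the coefficients in each residue class modulo $3$ are all determined by any one of them; the second relation forces $c_{m}(q)=c_{1-m}(q)$, which interchanges the residue classes $0$ and $1$ and preserves that of $2$. Combining $c_{2}=c_{-1}$ (from the second relation) with the instance $c_{2}=-c_{-1}$ of the first relation (take $n=-1$) forces $c_{-1}=0$; hence $c_{n}(q)$ vanishes throughout the residue class of $2$, and $c_{0}$ then determines every remaining coefficient. Therefore the space of functions holomorphic on $\mathbb{C}^{*}$ that satisfy both relations in \eqref{plan:feqQPI} is at most one-dimensional, and since it contains the two nonzero functions $Q(q,z)$ and $F(q,z)$, they must be proportional: $Q(q,z)=\lambda(q)\,F(q,z)$ for some $\lambda(q)$ independent of $z$.

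The final step determines $\lambda(q)$ by setting $z=1$. Pairing the terms $k$ and $-k$ collapses $F(q,1)$ to $2\sum_{k}(-1)^{k}q^{(3k^{2}-k)/2}$, which equals $2\,(q;q)_{\infty}$ by Euler's pentagonal number theorem; and using $(-1;q)_{\infty}=2(-q;q)_{\infty}$ together with $(-q;q)_{\infty}(q;q^{2})_{\infty}=1$ one finds
\[
Q(q,1)=(q;q)_{\infty}\cdot 2(-q;q)_{\infty}\cdot(-q;q)_{\infty}\cdot(q;q^{2})_{\infty}\cdot(q;q^{2})_{\infty}=2\,(q;q)_{\infty}
\]
as well. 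Since this common value is a nonzero power series in $q$, we conclude $\lambda(q)=1$, which is \eqref{qpi}. The only genuinely delicate point is the bookkeeping behind \eqref{plan:feqQPI}: one must shift all five Pochhammer factors of $Q(q,z)$, including the two with base $q^{2}$, and check that the spurious factors $1+z$ and $1-qz^{2}$ cancel in pairs, leaving precisely $-1/(qz^{3})$. The remaining manipulations are routine.
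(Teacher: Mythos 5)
Your argument is correct. Note first that the paper does not prove Theorem~\ref{T1.6} at all: the quintuple product identity is quoted as a known classical result with a reference to Gasper--Rahman, so there is no in-paper proof to compare against. What you supply is the standard self-contained ``functional equation'' proof, and all of its ingredients check out. Writing $F(q,z)$ for the series and using $(a;q)_\infty=(1-a)(aq;q)_\infty$ on each of the five factors of $Q(q,z)$ in \eqref{capid}, the ratio $Q(q,qz)/Q(q,z)$ is $\frac{(1+1/z)\,(1-q^{-1}z^{-2})}{(1+z)\,(1-qz^2)}=-\frac{1}{qz^3}$, and $Q(q,1/z)/Q(q,z)=(1+1/z)/(1+z)=1/z$ (the two base-$q^2$ factors simply swap); the shifts $k\mapsto k+1$ and $k\mapsto -k$ give the same two relations for $F$. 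Your coefficient analysis is also right: $c_{n+3}=-q^{n+1}c_n$ together with $c_m=c_{1-m}$ kills the residue class $2 \imod 3$ (via $c_2=c_{-1}=-c_{-1}$) and ties class $1$ to class $0$ through $c_1=c_0$, so the solution space is at most one-dimensional and $Q=\lambda(q)F$. The normalization at $z=1$ uses the pentagonal number theorem and Euler's identity $(-q;q)_\infty(q;q^2)_\infty=1$ correctly, giving $F(q,1)=Q(q,1)=2(q;q)_\infty\neq 0$ and hence $\lambda(q)=1$. The only external inputs are the pentagonal number theorem and Euler's identity, both of which are standard and strictly weaker than the result being proved, so there is no circularity. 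In short: the paper cites, you prove, and your proof is a legitimate and essentially optimal elementary route.
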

\noindent
Identity \eqref{cpi} was independently found by Kanade--Russell \cite{KR} and Kur\c{s}ung\"oz \cite{K}.
It represents the analytic version of Capparelli's Theorem \cite{C_thesis}, \cite{C}.\\ 

In this paper we prove many new $q$-binomial identities involving the Legendre symbol $\mod 3$ such as
\begin{equation}
\sum_{j=-\infty}^\infty (-1)^j q^{j^2} \jacobi{j+1}{3} {2L+1\brack L+j}_{q^2}=\frac{(q^3;q^6)_L}{(q;q^2)_{L+1}}(1-q^{2(1+2L)}).
\label{}
\end{equation}
To this end we will employ
\begin{theorem}\label{T1.4}{Polynomial Analogue of the Jacobi Triple Product Identity} [\cite{A1} p. 49]
For $n,m\in \mathbf Z_{\geq 0}$,
\begin{equation}
\sum_{i=-n}^m q^{i^2} x^i{n+m\brack n+i}_{q^2}=(-q/x;q^2)_n(-qx;q^2)_m.
\label{1.4}
\end{equation}
\end{theorem}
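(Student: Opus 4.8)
The plan is to prove \eqref{1.4} by a short induction that strips the factors on the right-hand side off one at a time, using nothing beyond the $q$-binomial recurrence \eqref{eq:Binom_rec}. Write $F_{n,m}(x):=\sum_{i=-n}^{m}q^{i^{2}}x^{i}{n+m\brack n+i}_{q^{2}}$ for the left-hand side. The base case $n=m=0$ is immediate, since both sides equal $1$.

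The main step is to fix $n$, induct on $m$, and show $F_{n,m}(x)=(1+q^{2m-1}x)\,F_{n,m-1}(x)$, which together with $(-qx;q^{2})_{m}=(1+q^{2m-1}x)(-qx;q^{2})_{m-1}$ carries the induction. I would apply \eqref{eq:Binom_rec} with $q$ replaced by $q^{2}$, in the form ${n+m\brack n+i}_{q^{2}}={n+m-1\brack n+i}_{q^{2}}+q^{2(m-i)}{n+m-1\brack n+i-1}_{q^{2}}$, and split $F_{n,m}(x)$ into two sums. In the first the $i=m$ term drops out because ${n+m-1\brack n+m}_{q^{2}}=0$, so it is exactly $F_{n,m-1}(x)$. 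In the second, replacing $i$ by $i+1$ makes the powers of $q$ combine as $q^{(i+1)^{2}}q^{2(m-i-1)}=q^{i^{2}}q^{2m-1}$ and produces one extra factor $x$, so it equals $q^{2m-1}x\,F_{n,m-1}(x)$. Summing gives the recurrence, and iterating down to $m=0$ yields $F_{n,m}(x)=(-qx;q^{2})_{m}\,F_{n,0}(x)$.

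It then remains to evaluate $F_{n,0}(x)$. Re-indexing by $j=-i$ gives $F_{n,0}(x)=\sum_{j=0}^{n}q^{j^{2}}x^{-j}{n\brack j}_{q^{2}}$, and since $q^{j^{2}}x^{-j}=(q/x)^{j}q^{j(j-1)}$, this is precisely the terminating $q$-binomial theorem $(a;q^{2})_{n}=\sum_{j\ge0}{n\brack j}_{q^{2}}(-a)^{j}q^{j(j-1)}$ evaluated at $a=-q/x$, i.e. $F_{n,0}(x)=(-q/x;q^{2})_{n}$; multiplying the two factors completes the argument. If one wants to avoid invoking the $q$-binomial theorem, one can instead observe that the substitution $i\mapsto-i$ together with ${n+m\brack n+i}_{q^{2}}={n+m\brack m-i}_{q^{2}}$ gives the symmetry $F_{n,m}(x)=F_{m,n}(1/x)$; feeding this into the $m$-recurrence produces the companion recurrence $F_{n,m}(x)=(1+q^{2n-1}/x)\,F_{n-1,m}(x)$, and the two recurrences with $F_{0,0}=1$ build up the full product. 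Either way, the only thing to watch is the routine bookkeeping of the index shift and of the boundary terms where a $q$-binomial coefficient vanishes because its lower index falls outside $[0,\text{top}]$; there is no analytic obstacle, since for fixed $n,m$ the statement is a polynomial identity in $x$ and $q$.
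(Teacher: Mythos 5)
Your argument is correct: the two-term splitting via \eqref{eq:Binom_rec} (with $q\to q^2$, top $n+m$, lower index $n+i$) does give $F_{n,m}(x)=(1+q^{2m-1}x)F_{n,m-1}(x)$ after the index shift $i\to i+1$, the boundary terms vanish for exactly the reasons you state, and $F_{n,0}(x)=(-q/x;q^2)_n$ follows from the terminating $q$-binomial theorem applied at $1/x$. Note, however, that the paper does not prove Theorem~\ref{T1.4} at all: it cites Andrews and merely remarks that \eqref{1.4} is a special case of \eqref{1.4a}, the point being that shifting $i\to i-n$ in \eqref{1.4a} with $L=n+m$ and then rescaling $x\to xq^{-2n}$ turns $(-xq^{1-2n};q^2)_{n+m}$ into $q^{-n^2}x^{n}(-q/x;q^2)_n\,(-qx;q^2)_m$, which cancels the prefactor and yields \eqref{1.4} directly. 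So the paper's route is a pure change of variables in a single known identity, whereas yours rebuilds the identity from the Pascal recurrence; your version is more self-contained (especially in the variant where the symmetry $F_{n,m}(x)=F_{m,n}(1/x)$ replaces the appeal to the $q$-binomial theorem, so that only \eqref{eq:Binom_rec} and $F_{0,0}=1$ are used), at the cost of being longer than the one-line reindexing the author had in mind.
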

\noindent
It is easy to check that \eqref{1.4} is a special case of the $q$-binomial theorem
\begin{equation}
\sum_{i=0}^L q^{i^2} x^i{L\brack i}_{q^2}=(-xq;q^2)_L,
\label{1.4a}
\end{equation}
with $L\in \mathbb{Z}_{\geq 0}$.\\
As $n,m\rightarrow\infty$, \eqref{1.4} becomes the Jacobi triple product identity
\begin{equation}
\sum_{i=-\infty}^\infty q^{i^2} x^i=(-q/x,-qx,q^2;q^2)_\infty.
\label{1.5}
\end{equation}

Eisenstein established the following formula for the Legendre symbol
\begin{equation}
\jacobi{j}{p}=\prod_{n=1}^{\frac{p-1}{2}}\frac{\sin(\frac{2\pi}{p}nj)}{\sin(\frac{2\pi}{p}n)},
\end{equation}
where $p$ is an odd prime.\\
Hence, 
\begin{equation}
\jacobi{j}{3}=\frac{\sin(\frac{2\pi}{3}j)}{\sin(\frac{2\pi}{3})}=\frac{w^j-\bar{w}^j}{w-\bar w},
\label{1.6}
\end{equation}
where $w=exp(\frac{2\pi}{3}I)$, $\bar{w}=w^{-1}$.\\

In Section 2, we will employ both \eqref{1.6} and Theorem~\ref{T1.4}. We would also need
\begin{theorem}\label{T1.1}{Special Case of Bailey's Lemma} \cite{P}, \cite{A}\\
For $a=0,1$, if
\begin{equation}
F_a(L,q)=\sum_{j=-\infty}^\infty\alpha_j(q){2L+a\brack L-j}_q,
\label{}
\end{equation}
then
\begin{equation}
\sum_{r\geq 0}\frac{q^{r^2+ar}(q;q)_{2L+a}}{(q;q)_{L-r}(q;q)_{2r+a}}F_a(r,q)=\sum_{j=-\infty}^\infty\alpha_j(q)q^{j^2+aj}{2L+a\brack L-j}_q.
\label{}
\end{equation}
\end{theorem}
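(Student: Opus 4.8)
The plan is to use linearity to strip off the arbitrary coefficients $\alpha_j(q)$, reduce the claim to a single $q$-binomial summation free of the $\alpha_j$'s, and then derive that summation from the terminating $q$-Chu--Vandermonde sum (or, alternatively, by induction).

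First I would substitute the definition of $F_a(r,q)$ into the left-hand side. Since ${2r+a\brack r-j}_q$ vanishes unless $-r-a\le j\le r$, and $1/(q;q)_{L-r}$ vanishes for $r>L$, the resulting double sum has only finitely many nonzero terms, so the order of summation may be interchanged freely. Collecting the coefficient of each $\alpha_j(q)$, it suffices to establish, for all $L\in\mathbb Z_{\ge0}$ and $j\in\mathbb Z$, the identity
\[
\sum_{r\ge0}\frac{q^{r^2+ar}(q;q)_{2L+a}}{(q;q)_{L-r}(q;q)_{2r+a}}{2r+a\brack r-j}_q=q^{j^2+aj}{2L+a\brack L-j}_q .
\]
Using ${n\brack k}_q={n\brack n-k}_q$ and $(-j-a)^2+a(-j-a)=j^2+aj$, both sides are invariant under $j\mapsto-(j+a)$, so I may assume $j\ge0$; and since both sides vanish when $j>L$, in fact $0\le j\le L$.

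Next I would write ${2r+a\brack r-j}_q=(q;q)_{2r+a}/\bigl((q;q)_{r-j}(q;q)_{r+j+a}\bigr)$, cancel $(q;q)_{2r+a}$ and $(q;q)_{2L+a}$, substitute $k=r-j$, and set $M=L-j\ge0$, $N=2j+a\ge0$. The identity above then collapses to the parameter-free summation
\[
\sum_{k=0}^{M}\frac{q^{k^2+kN}}{(q;q)_{M-k}\,(q;q)_k\,(q;q)_{k+N}}=\frac{1}{(q;q)_M\,(q;q)_{M+N}} .
\]
To prove this I would pull the factor $1/\bigl((q;q)_M(q;q)_N\bigr)$ out of the sum via $1/(q;q)_{M-k}=(-1)^k q^{Mk-\binom k2}(q^{-M};q)_k/(q;q)_M$ and $(q;q)_{k+N}=(q;q)_N\,(q^{N+1};q)_k$; after simplifying the exponent of $q$, the remaining sum is ${}_{1}\phi_{1}(q^{-M};q^{N+1};q,q^{M+N+1})$, which a classical summation (a terminating case of $q$-Chu--Vandermonde; see [\cite{GR}, Appendix~(II)]) evaluates to $(q^{M+N+1};q)_\infty/(q^{N+1};q)_\infty=1/(q^{N+1};q)_M$. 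Since $(q;q)_N(q^{N+1};q)_M=(q;q)_{M+N}$, this yields exactly the right-hand side. Alternatively, the last display can be proved by induction on $M$, applying \eqref{eq:Binom_rec} to ${M+N\brack M-k}_q$ after clearing denominators.

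The substitution, the symmetry reduction, and tracking the case $a\in\{0,1\}$ are all routine; the only real content is the final $q$-binomial summation, which is where I expect the difficulty to lie. It is precisely the point at which the $q$-Chu--Vandermonde sum (the $\rho_1,\rho_2\to\infty$ degeneration of the $q$-Pfaff--Saalsch\"utz identity underlying Bailey's lemma in general) does all the work.
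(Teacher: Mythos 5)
Your proof is correct. The paper itself gives no proof of Theorem~\ref{T1.1} --- it is quoted from Paule and Andrews --- but your argument (interchanging the finite sums, reducing to the single coefficient identity $\sum_{r}\frac{q^{r^2+ar}(q;q)_{2L+a}}{(q;q)_{L-r}(q;q)_{2r+a}}{2r+a\brack r-j}_q=q^{j^2+aj}{2L+a\brack L-j}_q$, and evaluating it via the ${}_1\phi_1$ summation, i.e.\ the $q$-Chu--Vandermonde degeneration of $q$-Pfaff--Saalsch\"utz) is precisely the standard derivation of this special Bailey-lemma iteration, and every step, including the symmetry $j\mapsto-(j+a)$ and the substitution $k=r-j$, $M=L-j$, $N=2j+a$, checks out.
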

\noindent
Observe that the right-hand side of the second equation in Theorem~\ref{T1.1} is of the same form as the right-hand side of the first equation.
Thus, we may iterate Theorem~\ref{T1.1} as often as we desire by updating $\alpha_j(q)$'s in each step. This procedure gives rise to an infinite hierarchy of polynomial identities.

\section{Six new $q$-hypergeometric hierarchies}\label{Sec:2}

\hskip 0.05in

In this section we will use polynomial analogue of the Jacobi triple identity to prove validity of six "seed" identities. These seeds are then extended to infinite hierarchies of $q$-series identities by means of Theorem~\ref{T1.1}.
\begin{theorem}\label{T1.5}{} 
\begin{equation}
\sum_{j=-\infty}^\infty \jacobi{j}{3} q^{j-1\choose 2} {2L\brack L+j}_q=
\left\{\begin{array}{ll}\frac{(-1;q^3)_{L-1}}{(-1;q)_{L-1}}q^{L-1}\frac{1-q^3}{1-q}(1-q^L), & L>0 \\ 0, & L=0\end{array}\right..
\label{*}
\end{equation}
\end{theorem}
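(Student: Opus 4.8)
The plan is to use the Eisenstein formula \eqref{1.6} to replace the Legendre symbol $\jacobi{j}{3}$ by $(w^j - \bar w^j)/(w - \bar w)$, thereby splitting the left-hand side of \eqref{*} into two sums that each match the shape of Theorem~\ref{T1.4}. Writing $\binom{j-1}{2} = \frac{(j-1)(j-2)}{2} = \frac{j^2-3j+2}{2}$, we have $q^{\binom{j-1}{2}} = q\cdot q^{(j^2-3j)/2}$. To bring this into the form $q^{i^2}x^i$ appearing in \eqref{1.4}, I would replace $q$ by $q^{1/2}$ in a suitably reindexed version — or, more cleanly, note that the summand $q^{\binom{j-1}{2}}{2L\brack L+j}_q$ with the substitution $i \mapsto j$ and an appropriate choice of base will let me write
\[
\sum_{j} \jacobi{j}{3} q^{\binom{j-1}{2}} {2L\brack L+j}_q
= \frac{q}{w-\bar w}\left( \sum_j q^{(j^2-3j)/2} w^j {2L\brack L+j}_q - (w\to\bar w)\right).
\]
The factor $q^{-3j/2}$ combines with $w^j$ to give $(w q^{-3/2})^j$, so each inner sum is exactly of the form $\sum_{i=-L}^{L} Q^{i^2} x^i {2L \brack L+i}_Q$ with $Q = q^{1/2}$ and $x = w q^{-3/2}$ (resp. $x = \bar w q^{-3/2}$). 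By Theorem~\ref{T1.4} (with $n=m=L$), each inner sum evaluates to $(-Q/x; Q^2)_L(-Qx; Q^2)_L = (-q^{2}/w; q)_L(-w q^{-1}; q)_L$ for the first and the conjugate expression for the second.

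The next step is to simplify the difference of these two product evaluations. With $w$ a primitive cube root of unity, $w$ and $\bar w$ are the two roots of $1+x+x^2 = 0$, so products of the form $(-\alpha w; q)_L(-\alpha \bar w; q)_L$ telescope nicely: $(1 + \alpha w q^k)(1 + \alpha \bar w q^k) = 1 - \alpha q^k + \alpha^2 q^{2k}$, and more to the point, $(1+\beta w)(1+\beta\bar w) = 1 - \beta + \beta^2$, while $(1+\beta w)(1 + \beta^2)(1 + \beta w^2)\cdots$ collapses because $w^3 = 1$. Concretely, grouping factors three at a time, $(-x;q)_{3} \cdot$ pattern gives $(1 - x^3 q^{\cdots})$-type factors; this is precisely the mechanism that turns the product $(-1;q)_{L-1}$-type expressions into $(-1;q^3)_{L-1}$-type expressions on the right-hand side of \eqref{*}. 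I expect the identity
\[
(-a w;q)_L (-a\bar w;q)_L = \frac{(-a^3 q^{\cdots}; q^3)_{\cdots}}{(\text{lower-order factor})}
\]
— or rather the cleaner statement that $(x;q)_L (xw; q)_L (x w^2; q)_L = (x^3; q^3)_L$ — to be the algebraic engine here, after one carefully tracks which of the three "rotated" products actually appear and supplies the missing third factor by hand. Since only two of the three rotations ($w$ and $\bar w = w^2$) show up, I would multiply and divide by the $w^0 = 1$ rotation $(-a;q)_L$ to complete the triple, then divide it back out; the leftover $(-a;q)_L = (-1;q)_{L}$-type factor (with $a$ eventually specialized via the powers of $q$ bookkept above) is what produces the ratio $(-1;q^3)_{L-1}/(-1;q)_{L-1}$.

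Finally, I would carefully assemble the constant and $q$-power prefactors: the explicit $q^1$ pulled out front, the $1/(w-\bar w)$ normalization (noting $w - \bar w = I\sqrt 3$), and the boundary behavior of the telescoped product at its endpoints, which should account for the factors $\frac{1-q^3}{1-q}$ and $(1-q^L)$ and $q^{L-1}$ on the right. The $L = 0$ case is immediate since ${0 \brack j}_q = \delta_{j,0}$ and $\jacobi{0}{3} = 0$, so the sum is empty — this matches the stated $0$. The main obstacle I anticipate is not any single deep step but the bookkeeping: correctly matching the shifted index ranges ($L+j$ running over $0,\dots,2L$ versus $L+i$ over the same), handling the half-integer base $q^{1/2}$ so that all exponents come out integral (the $\binom{j-1}{2}$ shift is exactly engineered so they do), and disentangling which factors of the finite products cancel against the $(-1;q)_{L-1}$ in the denominator versus which survive into the numerator $(-1;q^3)_{L-1}$. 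A safe alternative to the root-of-unity manipulation, if the telescoping proves delicate, is to prove \eqref{*} by induction on $L$ using the $q$-binomial recurrence \eqref{eq:Binom_rec}, checking the base case $L=1$ directly and verifying that the right-hand side satisfies the same two-term recurrence; but I expect the Eisenstein-plus-Theorem~\ref{T1.4} route to be shorter and more in the spirit of the paper.
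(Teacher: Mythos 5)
Your proposal follows essentially the same route as the paper: Eisenstein's formula \eqref{1.6} to split the Legendre symbol over the cube roots of unity, Theorem~\ref{T1.4} with $x=wq^{-3/2}$ (the paper avoids the half-integer base by first proving the identity with $q\to q^2$, i.e.\ with exponent $j^2-3j+2$ and binomials in base $q^2$, then substituting back), and the collapse $(1+wq^{2k})(1+\bar wq^{2k})=\frac{1+q^{6k}}{1+q^{2k}}$ to produce the ratio $(-1;q^3)_{L-1}/(-1;q)_{L-1}$. The remaining real-part/boundary-factor bookkeeping you defer is exactly what the paper compresses into one displayed line, so your outline is correct and matches the published proof.
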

\begin{proof}
\begin{equation}
\begin{split}
&\sum_{j=-\infty}^\infty \jacobi{j}{3} q^{j^2-3j+2} {2L\brack L+j}_{q^2}=
q^2\sum_{j=-\infty}^\infty q^{j^2}\frac{w^j-\bar{w}^j}{w-\bar w}q^{-3j}{2L\brack L+j}_{q^2}=\\
&2\Re(\frac{q^2}{w-\bar w}(-q^4\bar w;q^2)_L(-q^{-2}w;q^2)_L)=\\
&\left\{\begin{array}{ll}(-w,-\bar w;q^2)_{L-1}q^{2(L-1)}(1-q^{2L})(1+q^2+q^4), & L>0 \\ 0, & L=0\end{array}\right.=\\
&\left\{\begin{array}{ll}\frac{(-1;q^6)_{L-1}}{(-1;q^2)_{L-1}}\frac{1-q^6}{1-q^2}q^{2(L-1)}(1-q^{2L}), & L>0 \\ 0, & L=0\end{array}\right..
\end{split}
\label{1.7}
\end{equation}
Replace $q^2\rightarrow q$ to arrive at \eqref{*}.
\end{proof}
\noindent
Applying Theorem~\ref{T1.1} with $a=0$ to \eqref{*} we derive
\begin{equation}
\frac{1-q^3}{1-q}\sum_{n\geq 0} q^{n^2+3n}\frac{(q)_{2L}}{(q)_{2n+2}(q)_{L-1-n}}\frac{(-1;q^3)_n}{(-1;q)_n}(1-q^{n+1})=
\sum_{j=-\infty}^\infty \jacobi{j}{3} q^{3{j\choose 2}}{2L\brack L+j}_q.
\label{}
\end{equation}
Applying Theorem~\ref{T1.1} with $a=0$ repeatedly "$v$-times" to \eqref{*} we derive
\begin{equation}
\begin{split}
\frac{1-q^3}{1-q}&\sum_{n_1,\ldots,n_v\geq 0} \frac{q^{\sum_{i=1}^v (N_i+2)N_i+n_v}} {(q)_{n_1\ldots}(q)_{n_{v-1}}(q)_{2n_v+2}}
\frac{(-1;q^3)_{n_v}}{(-1;q)_{n_v}} (1-q^{1+n_v}) \frac{(q)_{2L}}{(q)_{L-1-N_1}}=\\
&\sum_{j=-\infty}^\infty \jacobi{j}{3} q^{\frac{(2v+1)j^2-3j}{2}-(v-1)}{2L\brack L+j}_q,
\end{split}
\label{l}
\end{equation}
where, here and everywhere, for $j=1,2,\ldots,v$ 
\begin{equation}
N_i=n_i+n_{i+1}+\cdots+n_v.
\label{}
\end{equation}
Multiplying both sides of \eqref{l} by $\frac{1-q}{1-q^3}$ and letting $L\rightarrow\infty$, we have with the aid of \eqref{qpi}
\begin{equation}
\begin{split}
&\sum_{n_1,\ldots,n_v\geq 0} \frac{q^{\sum_{i=1}^v (N_i+2)N_i+n_v}(1-q^{1+n_v})} {(q)_{n_1\ldots}(q)_{n_{v-1}}(q)_{2n_v+2}}
\frac{(-1;q^3)_{n_v}}{(-1;q)_{n_v}}= \frac{1-q}{1-q^3}\\
&\sum_{j=-\infty}^\infty \frac{ \jacobi{j}{3} q^{\frac{(2v+1)j^2-3j}{2}+1-v}}{(q)_\infty}=
\frac{(q^{6v+3},q^{3v},q^{3v+3};q^{6v+3})_\infty(q^{12v+3},q^{12v+9};q^{12v+6})_\infty}{(q^2;q)_\infty}.
\end{split}
\label{}
\end{equation}

Next, we will prove
\begin{theorem}\label{T1.7}{} 
\begin{equation}
\sum_{j=-\infty}^\infty q^{{j-1\choose 2}} \jacobi{j}{3} {2L+1\brack L+j}_q=
q^L\frac{(-1;q^3)_L}{(-1;q)_L}(1+q-q^{L+1}).
\label{**}
\end{equation}
\end{theorem}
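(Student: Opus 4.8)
The plan is to mimic the proof of Theorem~\ref{T1.5} essentially verbatim, replacing the even-row binomial $\brack{2L}{L+j}$ by the odd-row binomial $\brack{2L+1}{L+j}$, so that the relevant instance of Theorem~\ref{T1.4} has top index $n+m = 2L+1$ rather than $2L$. Concretely, I would start from
\begin{equation*}
\sum_{j=-\infty}^\infty \jacobi{j}{3} q^{j^2-3j+2} {2L+1\brack L+j}_{q^2},
\end{equation*}
use the Eisenstein formula \eqref{1.6} to write $\jacobi{j}{3} = \frac{w^j-\bar w^j}{w-\bar w}$, and split the sum into the $w^j$-part and the $\bar w^j$-part, which are complex conjugates of each other. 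Then I rewrite $q^{j^2-3j+2}x^j = q^2 \cdot q^{j^2} (xq^{-3})^j$ and apply Theorem~\ref{T1.4} with $n = L+$ something, $m = L+1 - $ something so that $n+i$ ranges appropriately over $L+j$; here the asymmetry $n=L$, $m=L+1$ (or the reverse) is what distinguishes this from Theorem~\ref{T1.5}. This gives the sum as $2\Re$ of $\frac{q^2}{w-\bar w}$ times a product of two $(\cdot;q^2)$-Pochhammer symbols, one of length $L$ and one of length $L+1$, with arguments involving $w$, $\bar w$ and powers of $q$.

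The main computation is then the simplification of this real part. Since $w^3 = 1$, the factors $(-q^{2k}w;q^2)$ and $(-q^{2k}\bar w;q^2)$ telescope nicely when combined in conjugate pairs: a pair $(1+q^{2k}w)(1+q^{2k}\bar w) = 1 + q^{2k}(w+\bar w) + q^{4k} = 1 - q^{2k} + q^{4k}$ (using $w + \bar w = -1$), and $(1-q^{2k})(1-q^{2k}+q^{4k}) = 1 - q^{6k}$ or similar, which is exactly the mechanism that produces the ratio $\frac{(-1;q^6)_{L-1}}{(-1;q^2)_{L-1}}$ in \eqref{1.7}. For the odd case I expect the extra factor of length one (the $(L+1)$-st versus $L$-th factor mismatch) to produce, after collecting, a factor like $(1+q-q^{L+1})$ in the $q^2\to q$ variables, matching the right side of \eqref{**}. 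Note there is no $L=0$ exceptional case here because $\brack{1}{j}_q$ is nonzero (it equals $1$ for $j=0,1$), consistent with the clean single-line answer in the statement.

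The step I expect to be the main obstacle is bookkeeping the exact arguments of the two Pochhammer symbols and the phase factors, i.e.\ getting the powers of $q$ (and the choice of $n$ vs $m$) right so that after taking $2\Re(\cdot)$ and dividing by $w - \bar w$ the imaginary parts cancel and the stray powers of $q$ collect into precisely $q^L(1+q-q^{L+1})$ rather than some shifted variant. In particular I would be careful that the shift $q^{j^2-3j+2} = q^2 q^{j^2} q^{-3j}$ forces $x = q^{-3}$ in \eqref{1.4}, whence the product becomes $(-q/x;q^2)_n(-qx;q^2)_m = (-q^4;q^2)_n(-q^{-2};q^2)_m$ with $w,\bar w$ inserted — and the factor $-q^{-2}w$ contributes the ``$1 + \cdots$'' tail that ultimately yields the $(1+q-q^{L+1})$ shape. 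Once the real part is in closed form, the final move, exactly as in Theorem~\ref{T1.5}, is the substitution $q^2 \to q$, which is purely cosmetic. I would then remark (or leave to the next paragraph, following the pattern of \eqref{l}) that applying Theorem~\ref{T1.1} with $a=1$ to \eqref{**} extends it to an infinite hierarchy, and that the $L\to\infty$ limit via \eqref{qpi} yields the corresponding $q$-series identity.
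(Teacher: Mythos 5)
Your proposal follows the paper's proof exactly: the paper likewise writes $q^{j^2-3j+2}=q^2q^{j^2}q^{-3j}$, inserts the Eisenstein formula \eqref{1.6}, applies \eqref{1.4} with $n=L$, $m=L+1$, $x=q^{-3}w$ to get $2\Re$ of $\frac{q^2}{w-\bar w}(-q^4\bar w;q^2)_L(-q^{-2}w;q^2)_{L+1}$, extracts the real factor $(-\bar w,-w;q^2)_L=\frac{(-1;q^6)_L}{(-1;q^2)_L}$, and reduces the leftover tail $1+\bar wq^{2L}(1+q^2)+\bar w^2q^{2(2L+1)}$ to $q^{2L}(1+q^2-q^{2L+2})$ before setting $q^2\to q$. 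The bookkeeping you flag as the main obstacle does work out as you predict, so the approach is correct and essentially identical to the paper's.
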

\begin{proof}
\begin{equation}
\begin{split}
&\sum_{j=-\infty}^\infty q^{j^2-3j+2} \jacobi{j}{3} {2L+1\brack L+j}_{q^2}=
q^2\sum_{j=-\infty}^\infty q^{j^2}\frac{w^j-\bar{w}^j}{w-\bar w}q^{-3j}{2L+1\brack L+j}_{q^2}=\textit{ by \eqref{1.4}}\\
&2\Re(\frac{1}{\bar w-w}(-\bar w,-w;q^2)_L (1+\bar wq^{2L}(1+q^2)+\bar w^2 q^{2(2L+1)}))=\\
&q^{2L}(-\bar w,-w;q^2)_L \cdot (1+q^2-q^{2L+2})=
 q^{2L}\frac{(-1;q^6)_L}{(-1;q^2)_L}(1+q^2-q^{2(L+1)}).
\end{split}
\label{}
\end{equation}
Replace $q^2\rightarrow q$ in the above to complete the proof.
\end{proof}
\noindent
Applying Theorem~\ref{T1.1} with $a=1$ to \eqref{**} $v$-times we get
\begin{equation}
\begin{split}
&\sum_{n_1,\ldots,n_v\geq 0} \frac{q^{\sum_{i=1}^v N_i(N_i+1)+n_v}} {(q)_{n_1\ldots}(q)_{n_{v-1}}(q)_{1+2n_v}}
\frac{(-1;q^3)_{n_v}}{(-1;q)_{n_v}}\frac{(q)_{2L+1}}{(q)_{L-N_1}}(1+q-q^{1+n_v})=\\
&\sum_{j=-\infty}^\infty (\frac{j}{3}) q^{\frac{(2v+1)j^2-(2v+3)j}{2}+1} {2L+1\brack L+j}_q.
\end{split}
\label{}
\end{equation}
Letting $L\rightarrow\infty$ and using \eqref{qpi} we derive
\begin{equation}
\sum_{n_1,\ldots,n_v\geq 0} \frac{q^{\sum_{i=1}^v N_i(N_i+1)+n_v}} {(q)_{n_1\ldots}(q)_{n_{v-1}}(q)_{1+2n_v}}
\frac{(-1;q^3)_{n_v}}{(-1;q)_{n_v}}(1+q-q^{1+n_v}) = \frac{Q(q^{6v+3},-q^{2v})}{(q)_\infty}.
\label{+}
\end{equation}
It is instructive to compare \eqref{+} with the following formula
\begin{equation}
\sum_{n_1,\ldots,n_v\geq 0} \frac{q^{\sum_{i=1}^v N_i^2}} {(q)_{n_1\ldots}(q)_{n_{v-1}}(q)_{2n_v}}
\frac{(-1;q^3)_{n_v}}{(-1;q)_{n_v}} = \frac{Q(q^{6v+3},-q^{v+1})}{(q)_\infty},
\label{++}
\end{equation}
first proven by McLaughlin and Sills in [\cite{MS1}, (5.9)].
We remark that products on the right of \eqref{+} and \eqref{++} are identical when $v=1$.

Next, we will prove
\begin{theorem}\label{T1.8}{}
\begin{equation}
\sum_{j=-\infty}^\infty (-1)^j q^{j^2} \jacobi{j+1}{3} {2L+1\brack L+j}_{q^2}=\frac{(q^3;q^6)_L}{(q;q^2)_{L+1}}(1-q^{2(1+2L)}).
\label{}
\end{equation}
\end{theorem}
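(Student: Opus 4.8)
The plan is to mimic the proofs of Theorems~\ref{T1.5} and~\ref{T1.7}: convert the Legendre symbol into the difference of roots of unity via \eqref{1.6}, collapse the resulting three theta-like sums into instances of the polynomial Jacobi triple product \eqref{1.4}, and then simplify the product side by recognizing $(-w,-\bar w;q^2)_L$-type factors as quotients of $q^6$- and $q^2$-Pochhammer symbols. Concretely, I would start from
\begin{equation*}
\sum_{j=-\infty}^\infty (-1)^j q^{j^2}\jacobi{j+1}{3}{2L+1\brack L+j}_{q^2}
=\frac{1}{w-\bar w}\sum_{j=-\infty}^\infty q^{j^2}(-1)^j\bigl(w^{j+1}-\bar w^{j+1}\bigr){2L+1\brack L+j}_{q^2}.
\end{equation*}
Pulling out the constants $w$ and $\bar w$ and writing $(-1)^j w^j = (-w)^j$, each of the two inner sums is of the form $\sum_j q^{j^2} x^j {2L+1\brack L+j}_{q^2}$ with $x=-w$ and $x=-\bar w$ respectively (so $x^j = x^{L+j}x^{-L}$, absorbing the shift). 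Applying Theorem~\ref{T1.4} with $n=L+1$, $m=L$ (the split that matches $n+i$ running from $0$ to $n+m=2L+1$), I get $(-q/x;q^2)_{L+1}(-qx;q^2)_L$ up to the monomial prefactor from the index shift, so the whole thing becomes
\begin{equation*}
\frac{w\,(-q\bar w;q^2)_{L+1}(-qw^{-1}\cdot\text{\scriptsize(adjust)})\cdots - \bar w(\cdots)}{w-\bar w},
\end{equation*}
i.e.\ a manifestly real combination that I would write as $2\Re\bigl(\tfrac{w}{w-\bar w}(-q/(-w);q^2)_{L+1}(-q(-w);q^2)_L\bigr)$ after bookkeeping the powers of $-1$ and of $q$ coming from the shift $j\mapsto L+j$.

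The second step is the algebraic simplification of that real part. Here $(-qw;q^2)_L(-q\bar w;q^2)_L$ — the symmetric pair — telescopes into $\frac{(q^3;q^6)_L}{(q;q^2)_L}$-type data, since $(1+qw q^{2k})(1+q\bar w q^{2k})=1+q^{2k+1}(w+\bar w)+q^{4k+2}=1-q^{2k+1}+q^{4k+2}=\frac{1+q^{3(2k+1)}}{1+q^{2k+1}}$, using $w+\bar w=-1$. This is exactly the mechanism that produced $\frac{(-1;q^6)_{L}}{(-1;q^2)_{L}}$ (equivalently $\frac{(q^3;q^6)_?}{(q;q^2)_?}$ families) in the earlier proofs, so I expect $\prod_{k=0}^{L-1}\frac{1+q^{6k+3}}{1+q^{2k+1}}=\frac{(-q^3;q^6)_L}{(-q;q^2)_L}$, and the asymmetric extra factor $(1+q^{2L+1}\cdot(\text{root of unity}))$ coming from the $(-q/x;q^2)_{L+1}$ having one more factor than $(-qx;q^2)_L$ will, after taking real parts, contribute the linear-in-$q$ correction that ultimately assembles into $(1-q^{2(1+2L)})$ and shifts one Pochhammer to $(q;q^2)_{L+1}$. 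I would then replace $q^2\to q$ at the end exactly as in Theorems~\ref{T1.5} and~\ref{T1.7}, except that here the statement is already phrased in $q^2$, so in fact I would work directly with base $q^2$ and not rescale — one should double-check whether the statement as printed wants the substitution or not.

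The main obstacle is the careful bookkeeping of the three competing monomial prefactors after the index shift $j\mapsto L+j$ in \eqref{1.4}, combined with the signs from $(-1)^j$ and the powers of $w,\bar w$: getting $2\Re$ of the right complex expression is where sign or exponent slips are most likely. A secondary subtlety is choosing the split $n=L+1,\ m=L$ (rather than $n=L,\ m=L+1$) correctly so that the asymmetry lands on the numerator $(q;q^2)_{L+1}$ and not the denominator; the symmetry $j\mapsto -j$ of the summand (here broken by the $(-1)^j$ and the shift $+1$ in the Legendre argument) should be used as a consistency check. Once the real part is written explicitly as a polynomial in $q$ with coefficients in $\mathbb{Q}(w)$, the reduction to $\frac{(q^3;q^6)_L}{(q;q^2)_{L+1}}(1-q^{2(1+2L)})$ is a routine product-telescoping identical in spirit to the two preceding proofs, so I do not anticipate real difficulty there beyond arithmetic care.
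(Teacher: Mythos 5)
Your strategy is exactly the paper's: convert $\jacobi{j+1}{3}$ via \eqref{1.6}, recognize $\sum_j q^{j^2}(-w)^j{2L+1\brack L+j}_{q^2}$ as an instance of \eqref{1.4}, take twice the real part, and telescope using $w+\bar w=-1$; you are also right that no rescaling $q^2\to q$ is needed here. But two of the concrete choices you commit to are wrong, and followed literally they give a different right-hand side. The split must be $n=L$, $m=L+1$: with $i=j$ the condition $n+i=L+j$ forces $n=L$, and then \eqref{1.4} with $x=-w$ gives, with no index shift at all, $(q\bar w;q^2)_L\,(qw;q^2)_{L+1}=(q\bar w,qw;q^2)_L(1-wq^{2L+1})$. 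Your choice $n=L+1$, $m=L$ forces $i=j-1$, which changes $q^{j^2}$ into $q^{i^2+2i+1}$ and hence changes $x$ as well; the expression you actually write down, $(q\bar w;q^2)_{L+1}(qw;q^2)_L=(q\bar w,qw;q^2)_L(1-\bar wq^{2L+1})$, is what you get by ignoring that shift, and it is not the correct one. The difference is fatal: since $2\Re\frac{w}{w-\bar w}=1$, $2\Re\frac{w^2}{w-\bar w}=w+\bar w=-1$, but $2\Re\frac{w\bar w}{w-\bar w}=0$, the correct version yields the factor $1+q^{2L+1}$ while yours yields just $1$, so you would land on $\frac{(q^3;q^6)_L}{(q;q^2)_L}$ (the right-hand side of Theorem~\ref{T1.9}) instead of the stated $\frac{(q^3;q^6)_L}{(q;q^2)_{L+1}}(1-q^{2(1+2L)})$.

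The second slip is in the telescoping. The factors produced by \eqref{1.4} with $x=-w$ carry minus signs:
\begin{equation*}
(1-qwq^{2k})(1-q\bar wq^{2k})=1+q^{2k+1}+q^{2(2k+1)}=\frac{1-q^{3(2k+1)}}{1-q^{2k+1}},
\end{equation*}
so $(q\bar w,qw;q^2)_L=\frac{(q^3;q^6)_L}{(q;q^2)_L}$. The product $\prod_k(1+qwq^{2k})(1+q\bar wq^{2k})=\frac{(-q^3;q^6)_L}{(-q;q^2)_L}$ that you compute never occurs and does not match the claimed answer. With both corrections made, the final step $(1+q^{2L+1})=\frac{1-q^{2(1+2L)}}{1-q^{2L+1}}$ assembles the right-hand side exactly as in the paper, so the outline is salvageable but the proof as written is not yet correct.
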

\begin{proof}
\noindent
Using \eqref{1.4} and \eqref{1.6} we have
\begin{equation}
\begin{split}
&\sum_{j=-\infty}^\infty (-1)^j q^{j^2} \jacobi{j+1}{3} {2L+1\brack L+j}_{q^2}=
\sum_{j=-\infty}^\infty (-1)^j\frac{w^{1+j}-\bar w^{1+j}}{w-\bar w} {2L+1\brack L+j}_{q^2} q^{j^2}=\\
&2\Re(\frac{w}{w-\bar w}\sum_{j=-\infty}^\infty q^{j^2}(-w)^j {2L+1\brack L+j}_{q^2})=
2\Re(\frac{w}{w-\bar w}(q\bar w,qw;q^2)_L(1-wq^{2L+1}))=\\
&(q\bar w,qw;q^2)_L(1+q^{2L+1})=
(1+q^{2L+1})\prod_{j=0}^{L-1}(1+q^{1+2j}+q^{2(1+2j)})=\frac{(q^3;q^6)_L}{(q;q^2)_{L+1}}(1-q^{2(1+2L)}).
\end{split}
\label{}
\end{equation}
\end{proof}
\noindent
Applying Theorem~\ref{T1.1} with $a=1$ and $q\rightarrow q^2$ $v$-times we obtain
\begin{equation}
\sum_{n_1,\ldots,n_v\geq 0} \frac{\tilde q^{\sum_{i=1}^v N_i(N_i+1)}} {(\tilde q)_{n_1\ldots}(\tilde q)_{n_{v-1}}(\tilde q)_{2n_v}}\cdot
\frac{(q^3;q^6)_{n_v}}{(q;q^2)_{1+n_v}} \cdot \frac{(\tilde q)_{2L+1}}{(\tilde q)_{L-N_1}}=
\sum_{j=-\infty}^\infty q^{(2v+1)j^2-2vj}(-1)^j {2L+1\brack L+j}_{\tilde q} \jacobi{j+1}{3},
\label{}
\end{equation}
where $\tilde q=q^2$.\\
Letting $L\rightarrow\infty$ and using \eqref{qpi} we arrive at
\begin{equation}
\sum_{n_1,\ldots,n_v\geq 0} \frac{\tilde q^{\sum_{i=1}^v N_i(N_i+1)}} {(\tilde q)_{n_1\ldots}(\tilde q)_{n_{v-1}}(\tilde q)_{2n_v}}\cdot
\frac{(q^3;q^6)_{n_v}}{(q;q^2)_{1+n_v}}= \frac{Q(q^{12v+6},q)}{(\tilde q)_\infty}.
\label{ms2}
\end{equation}
We remark that $v=1$ case of \eqref{ms2} was first proven by McLaughlin and Sills [\cite{MS2}, Thm 4.8].

Next, we will prove
\begin{theorem}\label{T1.9}{}
\begin{equation}
\sum_{j=-\infty}^\infty (-1)^j q^{j^2} \jacobi{j+1}{3} {2L\brack L+j}_{q^2}=\frac{(q^3;q^6)_L}{(q;q^2)_L}.
\label{r}
\end{equation}
\end{theorem}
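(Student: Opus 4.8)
The plan is to follow the same strategy used for Theorems~\ref{T1.7} and \ref{T1.8}: expand the Legendre symbol by the Eisenstein formula \eqref{1.6}, recognize the resulting sum as a specialization of the polynomial analogue of the Jacobi triple product \eqref{1.4}, and then take the real part. Writing $\jacobi{j+1}{3}=\frac{w^{1+j}-\bar w^{1+j}}{w-\bar w}$ with $w=\exp(\frac{2\pi}{3}I)$, the left-hand side of \eqref{r} becomes
\[
2\Re\Big(\frac{w}{w-\bar w}\sum_{j=-\infty}^\infty q^{j^2}(-w)^j{2L\brack L+j}_{q^2}\Big).
\]

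Next I would apply \eqref{1.4} with $n=m=L$ and $x=-w$. Since $w\bar w=1$ we have $-q/x=q\bar w$ and $-qx=qw$, so the inner sum collapses to $(q\bar w;q^2)_L(qw;q^2)_L$. Because $w+\bar w=-1$, this product is real and equals
\[
\prod_{k=0}^{L-1}\big(1-q^{1+2k}(w+\bar w)+q^{2(1+2k)}\big)=\prod_{k=0}^{L-1}\big(1+q^{1+2k}+q^{2(1+2k)}\big).
\]
The only remaining numerical point is $2\Re\big(w/(w-\bar w)\big)=1$; note that here, unlike the $2L+1$ case of Theorem~\ref{T1.8}, the polynomial Jacobi triple product is \emph{symmetric} in its two parameters, so no boundary factor such as $1-wq^{2L+1}$ is produced. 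Hence the left side of \eqref{r} equals $\prod_{k=0}^{L-1}\big(1+q^{1+2k}+q^{2(1+2k)}\big)$.

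Finally I would rewrite each factor via $1+x+x^2=\frac{1-x^3}{1-x}$ with $x=q^{1+2k}$, obtaining $\prod_{k=0}^{L-1}\frac{1-q^{3(1+2k)}}{1-q^{1+2k}}=\frac{(q^3;q^6)_L}{(q;q^2)_L}$, which is exactly the right-hand side of \eqref{r}. I do not anticipate a real obstacle: the argument is essentially a verbatim adaptation of the proof of Theorem~\ref{T1.8}, the one thing to be careful about being the correct specialization $n=m=L$ (rather than $n=L$, $m=L+1$) in \eqref{1.4}, which is precisely what removes the $(1-q^{2(1+2L)})$ factor appearing in Theorem~\ref{T1.8}.
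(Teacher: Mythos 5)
Your proposal is correct and is essentially identical to the paper's own proof: both expand $\jacobi{j+1}{3}$ via the Eisenstein formula \eqref{1.6}, apply \eqref{1.4} with $n=m=L$ and $x=-w$ to get $(q\bar w,qw;q^2)_L$, and use $2\Re\big(w/(w-\bar w)\big)=1$ together with $w+\bar w=-1$ to reach $\frac{(q^3;q^6)_L}{(q;q^2)_L}$. You actually supply more of the intermediate detail than the paper does.
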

\begin{proof}
\begin{equation}
\begin{split}
&\sum_{j=-\infty}^\infty (-1)^j q^{j^2} \jacobi{j+1}{3} {2L\brack L+j}_{q^2}=
\sum_{j=-\infty}^\infty (-1)^j\frac{w^{1+j}-\bar w^{1+j}}{w-\bar w} {2L\brack L+j}_{q^2} q^{j^2}=\\
&2\Re(\frac{w}{w-\bar w}(q\bar w,qw;q^2)_L=\frac{(q^3;q^6)_L}{(q;q^2)_L}.
\end{split}
\label{}
\end{equation}
\end{proof}
\noindent
Applying Theorem~\ref{T1.1} with $a=0$ and $q\rightarrow q^2$ $v$-times to \eqref{r} we obtain
\begin{equation}
\sum_{n_1,\ldots,n_v\geq 0} \frac{\tilde q^{\sum_{i=1}^v N_i^2}} {(\tilde q)_{n_1\ldots}(\tilde q)_{n_{v-1}}(\tilde q)_{2n_v}}\cdot
\frac{(q^3;q^6)_{n_v}}{(q;q^2)_{n_v}} \cdot \frac{(\tilde q)_{2L}}{(\tilde q)_{L-N_1}}=
\sum_{j=-\infty}^\infty (-1)^j \jacobi{j+1}{3}
q^{(2v+1)j^2}{2L\brack L+j}_{\tilde q},
\label{}
\end{equation}
where $\tilde q=q^2$.\\
Letting $L\rightarrow\infty$ we obtain with the aid of \eqref{qpi}
\begin{equation}
\sum_{n_1,\ldots,n_v\geq 0} \frac{\tilde q^{\sum_{i=1}^v N_i^2}} {(\tilde q)_{n_1\ldots}(\tilde q)_{n_{v-1}}(\tilde q)_{2n_v}}\cdot
\frac{(q^3;q^6)_{n_v}}{(q;q^2)_{n_v}}= \frac{Q(q^{12v+6},q^{2v+1})}{(\tilde q)_\infty}.
\label{}
\end{equation}
Case $v=1$ 
\begin{equation}
\sum_{n\geq 0} \frac{q^{2n^2}}{(\tilde q)_{2n}}\frac{(q^3;q^6)_n}{(q;q^2)_n}= \frac{Q(q^{18},q^3)}{(\tilde q)_\infty},
\label{}
\end{equation}
was first recorded by Ramanujan and proven by Andrews and Berndt in [\cite{AB}, Ent:5, 3.4].

Next, we will prove
\begin{theorem}\label{T1.10}{}
\begin{equation}
\sum_{j=-\infty}^\infty (-1)^j q^{j^2} \jacobi{j+1}{3} {2L+1\brack L-j}_{q^2}=\frac{(q^3;q^6)_L}{(q;q^2)_L}.
\label{LS(124)}
\end{equation}
\end{theorem}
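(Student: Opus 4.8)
The plan is to prove \eqref{LS(124)} by the same mechanism used for Theorems~\ref{T1.8} and~\ref{T1.9}: feed the left-hand side into Theorem~\ref{T1.4} after rewriting the $q$-binomial coefficient symmetrically. First I would use ${a+b\brack a}_q={a+b\brack b}_q$ to replace ${2L+1\brack L-j}_{q^2}$ by ${2L+1\brack (L+1)+j}_{q^2}$, so that the summand is indexed exactly as in \eqref{1.4} with $n=L+1$ and $m=L$ (note $n+m=2L+1$). Then, using the Eisenstein formula \eqref{1.6} I would write $\jacobi{j+1}{3}=\frac{w^{j+1}-\bar w^{j+1}}{w-\bar w}$ and absorb the sign via $(-1)^jw^{j+1}=w(-w)^j$. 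This turns the left side of \eqref{LS(124)} into
\[
\frac{1}{w-\bar w}\Bigl(w\sum_{j}q^{j^2}(-w)^j{2L+1\brack (L+1)+j}_{q^2}-\bar w\sum_{j}q^{j^2}(-\bar w)^j{2L+1\brack (L+1)+j}_{q^2}\Bigr),
\]
which is $2\Re$ of the first term since the two inner sums are complex conjugates.

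Next I would evaluate the inner sum by Theorem~\ref{T1.4} with $x=-w$ (using $1/w=\bar w$): it equals $(-q/(-w);q^2)_{L+1}(-q(-w);q^2)_L=(q\bar w;q^2)_{L+1}(qw;q^2)_L$, and similarly the conjugate sum equals $(qw;q^2)_{L+1}(q\bar w;q^2)_L$. Pulling out the common real product $(qw;q^2)_L(q\bar w;q^2)_L$ and using $(q\bar w;q^2)_{L+1}=(q\bar w;q^2)_L(1-q^{2L+1}\bar w)$ together with its conjugate, the left side of \eqref{LS(124)} becomes
\[
\frac{(qw;q^2)_L(q\bar w;q^2)_L}{w-\bar w}\bigl(w(1-q^{2L+1}\bar w)-\bar w(1-q^{2L+1}w)\bigr).
\]

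The one spot that needs care — and the only place this argument departs from that of Theorem~\ref{T1.9} — is the bracket above: because $w\bar w=1$, the two terms $q^{2L+1}w\bar w$ cancel and the bracket collapses to $w-\bar w$. Hence the whole expression reduces to $(qw;q^2)_L(q\bar w;q^2)_L$, which is the same closed form produced in the proof of Theorem~\ref{T1.9}. To finish, I would expand
\[
(qw;q^2)_L(q\bar w;q^2)_L=\prod_{k=0}^{L-1}\bigl(1-q^{2k+1}(w+\bar w)+q^{2(2k+1)}\bigr)=\prod_{k=0}^{L-1}(1+q^{2k+1}+q^{4k+2}),
\]
using $w+\bar w=-1$, and then apply $1+t+t^2=(1-t^3)/(1-t)$ with $t=q^{2k+1}$ to telescope this into $\frac{(q^3;q^6)_L}{(q;q^2)_L}$, the right-hand side of \eqref{LS(124)}. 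I do not expect any genuine obstacle; the only thing to watch is the index bookkeeping when invoking Theorem~\ref{T1.4} with the asymmetric pair $n=L+1$, $m=L$, and confirming that the extra factor $(1-q^{2L+1}w)$ coming from $(qw;q^2)_{L+1}$ does not survive — in contrast to the proof of Theorem~\ref{T1.8}, where an analogous boundary contribution did persist and produced the factor $(1+q-q^{L+1})$.
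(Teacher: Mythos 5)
Your proof is correct, but it follows a genuinely different route from the one in the paper. The paper does not invoke Theorem~\ref{T1.4} directly for this seed; instead it applies the $q$-binomial recurrence \eqref{eq:Binom_rec} to split ${2L+1\brack L+1+j}_{q^2}$ into ${2L\brack L+j}_{q^2}$ plus a correction term, observes that after reindexing the correction sum becomes $-\sum_j(-1)^jq^{j^2}\jacobi{j}{3}{2L\brack L+j}_{q^2}$, which is odd under $j\to-j$ and hence vanishes, and then quotes Theorem~\ref{T1.9} for the surviving sum. Your argument is instead self-contained: you run the same real-part computation used for Theorems~\ref{T1.8}, \ref{T1.9} and \ref{T1.11}, with the asymmetric choice $n=L+1$, $m=L$ in \eqref{1.4}, and the bracket $w(1-q^{2L+1}\bar w)-\bar w(1-q^{2L+1}w)=w-\bar w$ collapses because $w\bar w=1$ --- this cancellation is precisely the product-side manifestation of the antisymmetry the paper exploits on the sum side. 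All the individual steps check out (the symmetric rewriting of the binomial, the substitution $x=-w$, the conjugate-pair structure, and the final telescoping via $w+\bar w=-1$). The paper's route is shorter given that \eqref{r} is already established; yours has the merit of uniformity with the other seeds and of making explicit why no boundary factor survives here, in contrast to Theorem~\ref{T1.8} where the analogous term produces $(1-q^{2(1+2L)})$.
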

\begin{proof}
\noindent
Using $q$-binomial recurrences \eqref{eq:Binom_rec} we have
\begin{equation}
\begin{split}
&\sum_{j=-\infty}^\infty (-1)^j q^{j^2} \jacobi{j+1}{3} {2L+1\brack L+1+j}_{\tilde q}=
\sum_{j=-\infty}^\infty (-1)^j q^{j^2} \jacobi{j+1}{3} {2L\brack L+j}_{\tilde q}+\\
q^{2L+1}&\sum_{j=-\infty}^\infty (-1)^j q^{j^2+2j+1} \jacobi{j+1}{3} {2L\brack L+1+j}_{\tilde q}.
\end{split}
\label{}
\end{equation}
Note that
\begin{equation}
\sum_{j=-\infty}^\infty (-1)^j q^{j^2+2j+1} \jacobi{j+1}{3} {2L\brack L+1+j}_{\tilde q}=
-\sum_{j=-\infty}^\infty (-1)^j q^{j^2} \jacobi{j}{3} {2L\brack L+j}_{\tilde q}.
\label{}
\end{equation}
Observe that the last sum on the right negates under $j\rightarrow -j$. Hence, it equals zero. \\
And so, with the aid of \eqref{r} we have
\begin{equation}
\sum_{j=-\infty}^\infty (-1)^j q^{j^2} \jacobi{j+1}{3} {2L+1\brack L-j}_{q^2}=
\sum_{j=-\infty}^\infty (-1)^j q^{j^2} \jacobi{j+1}{3} {2L\brack L-j}_{q^2}=
\frac{(q^3;q^6)_L}{(q;q^2)_L}.
\label{}
\end{equation}
\end{proof}
\noindent
Using Theorem~\ref{T1.1} with $a=1$ and $q\rightarrow q^2$ to get
\begin{equation}
\sum_{n_1,\ldots,n_v\geq 0} \frac{\tilde q^{\sum_{i=1}^v N_i(N_i+1)}} {(\tilde q)_{n_1\ldots}(\tilde q)_{n_{v-1}}(\tilde q)_{2n_v+1}}\cdot
\frac{(q^3;q^6)_{n_v}}{(q;q^2)_{n_v}}\cdot \frac{(\tilde q)_{2L+1}}{(\tilde q)_{L-N_1}}=
\sum_{j=-\infty}^\infty (-1)^j q^{2v+1)^{j^2}+2vj} \jacobi{j+1}{3} {2L+1\brack L-j}_{q^2}.
\label{}
\end{equation}
As $L\rightarrow\infty$, we obtain with the aid of \eqref{qpi}
\begin{equation}
\sum_{n_1,\ldots,n_v\geq 0} \frac{\tilde q^{\sum_{i=1}^v N_i(N_i+1)}} {(\tilde q)_{n_1\ldots}(\tilde q)_{n_{v-1}}(\tilde q)_{2n_v+1}}\cdot
\frac{(q^3;q^6)_{n_v}}{(q;q^2)_{n_v}}=\frac{Q(q^{12v+6},q^{4v+1})}{(\tilde q)_\infty},
\label{}
\end{equation}
where $\tilde q=q^2$. Case $v=1$ yields
\begin{equation}
\sum_{n\geq 0} \frac{q^{2n(n+1)}}{(\tilde q)_{2n+1}}\frac{(q^3;q^6)_n}{(q;q^2)_n}= \frac{Q(q^{18},q^5)}{(\tilde q)_\infty},
\label{}
\end{equation}
which is item (124) in Slater's compendium of Rogers--Ramanujan type identities \cite{S}.

Next, we will prove
\begin{theorem}\label{T1.11}{}
\begin{equation}
\sum_{j=-\infty}^\infty (-1)^j \jacobi{j}{3} {2L+1\brack L-j}_{\tilde q} q^{j^2}=q^{1+2L}\frac{(q^3;q^6)_L}{(q;q^2)_L}.
\label{SL(125)}
\end{equation}
\end{theorem}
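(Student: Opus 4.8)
The plan is to mirror the argument of Theorem~\ref{T1.10}: derive the $2L+1$ identity \eqref{SL(125)} from the already-established $2L$ identity of Theorem~\ref{T1.9} by means of the $q$-binomial recurrence \eqref{eq:Binom_rec}. Writing $\tilde q = q^2$ throughout, I would start from
\begin{equation}
{2L+1\brack L-j}_{\tilde q}={2L\brack L-j}_{\tilde q}+\tilde q^{L+1+j}{2L\brack L-1-j}_{\tilde q},
\label{eq:rec-applied}
\end{equation}
multiply by $(-1)^j q^{j^2}\jacobi{j}{3}$ and sum over $j\in\mathbf Z$. The first resulting sum, $\sum_j(-1)^jq^{j^2}\jacobi{j}{3}{2L\brack L-j}_{\tilde q}$, is antisymmetric under $j\mapsto -j$ (since $\jacobi{j}{3}$ is odd and $q^{j^2}$, $(-1)^j$, ${2L\brack L-j}_{\tilde q}$ are all even in $j$ after the reflection) and hence vanishes, exactly as in the proof of Theorem~\ref{T1.10}.

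That leaves the second sum, $q^{2L+2}\sum_j(-1)^jq^{j^2+2j}\jacobi{j}{3}{2L\brack L-1-j}_{\tilde q}$. Here I would shift the summation index, replacing $j$ by $j-1$ (equivalently substituting so that $L-1-j$ becomes $L+j$ with a new $j$), which turns the exponent $j^2+2j$ into $(j-1)^2+2(j-1)=j^2-1$, turns $(-1)^j$ into $-(-1)^j$, and turns $\jacobi{j}{3}$ into $\jacobi{j-1}{3}$. Since $\jacobi{j-1}{3}$ as a function of the new index equals $\jacobi{j+1}{3}$ up to a sign one must track carefully — in fact $\jacobi{-(j+1)}{3}=-\jacobi{j+1}{3}$, so $\jacobi{j-1}{3}$ and $\jacobi{j+1}{3}$ differ only by the periodicity of the Legendre symbol mod $3$ and one checks $\jacobi{j-1}{3}=\jacobi{j+2}{3}$... this bookkeeping is the one delicate point. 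The upshot should be that the second sum becomes a constant multiple of $\sum_j(-1)^jq^{j^2}\jacobi{j+1}{3}{2L\brack L+j}_{\tilde q}$, which by Theorem~\ref{T1.9} equals $\frac{(q^3;q^6)_L}{(q;q^2)_L}$. Combining the prefactors $q^{2L+2}$, the $q^{-1}$ from the exponent shift, and the sign $-1$ from $(-1)^{j-1}$ together with a compensating sign, I expect everything to collapse to $q^{1+2L}\frac{(q^3;q^6)_L}{(q;q^2)_L}$, which is \eqref{SL(125)}.

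The main obstacle I anticipate is precisely the index-shift bookkeeping in the second sum: getting the parity of $(-1)^j$, the argument of the Legendre symbol, and the power of $q$ to align so that the residual sum is literally the left-hand side of \eqref{r} (possibly after a further reflection $j\mapsto -j$, under which that sum is \emph{invariant} rather than antisymmetric, since $\jacobi{j+1}{3}{2L\brack L+j}_{\tilde q}$ reflects to $\jacobi{-j+1}{3}{2L\brack L-j}_{\tilde q}=\jacobi{j-1}{3}{2L\brack L-j}_{\tilde q}$ and one must verify this reproduces the same series). Once the antisymmetric vanishing and the single reflection are handled correctly, the identity follows immediately from Theorem~\ref{T1.9} with no further computation, so I would present the proof compactly in two displayed equations as is done for Theorems~\ref{T1.9} and~\ref{T1.10}.
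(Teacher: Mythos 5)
Your route is viable and genuinely different from the paper's. The paper proves Theorem~\ref{T1.11} directly, by the same mechanism as Theorems~\ref{T1.5}--\ref{T1.9}: it writes $\jacobi{j}{3}=(w^j-\bar w^j)/(w-\bar w)$ via \eqref{1.6}, applies \eqref{1.4} with $n=L+1$, $m=L$, $x=-w$ to get $2\Re\bigl(\frac{1}{w-\bar w}(q\bar w;q^2)_{L+1}(qw;q^2)_L\bigr)$, and extracts the real part, in which only the term $-q^{2L+1}\bar w$ of the extra factor $(1-q^{2L+1}\bar w)$ survives, yielding $q^{1+2L}(q\bar w,qw;q^2)_L=q^{1+2L}\frac{(q^3;q^6)_L}{(q;q^2)_L}$. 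Your reduction to Theorem~\ref{T1.9} via \eqref{eq:Binom_rec}, in the style of the paper's proof of Theorem~\ref{T1.10}, does close: after the shift $j\to j-1$ the second piece becomes $-q^{2L+1}\sum_j(-1)^jq^{j^2}\jacobi{j-1}{3}{2L\brack L-j}_{\tilde q}$, and reflecting $j\to -j$ sends $\jacobi{j-1}{3}$ to $\jacobi{-(j+1)}{3}=-\jacobi{j+1}{3}$ and ${2L\brack L-j}_{\tilde q}$ to ${2L\brack L+j}_{\tilde q}$, so that sum equals \emph{minus} the left side of \eqref{r}; the two minus signs cancel and you land exactly on $q^{1+2L}\frac{(q^3;q^6)_L}{(q;q^2)_L}$. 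One correction to your bookkeeping: you wrote $\jacobi{-j+1}{3}=\jacobi{j-1}{3}$, but the symbol is odd, so $\jacobi{-(j-1)}{3}=-\jacobi{j-1}{3}$; the reflected series is the \emph{negative} of the series written with $\jacobi{j+1}{3}$, not the same one, and it is precisely this extra $-1$ that cancels the $-1$ produced by $(-1)^{j-1}$. (Equivalently, one can expand $\jacobi{j-1}{3}=-\jacobi{j}{3}-\jacobi{j+1}{3}$ and discard the $\jacobi{j}{3}$ piece by the same antisymmetry argument that killed your first sum.) The paper's direct computation is shorter and uniform with the other seeds; your version has the merit of recycling Theorem~\ref{T1.9} without a fresh appeal to \eqref{1.4}.
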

\begin{proof}
\begin{equation}
\begin{split}
&\sum_{j=-\infty}^\infty (-1)^j \jacobi{j}{3} {2L+1\brack L-j}_{\tilde q} q^{j^2}=
\sum_{j=-\infty}^\infty (-1)^j\frac{w^j-\bar w^j}{w-\bar w} {2L+1\brack L+1+j}_{q^2} q^{j^2}=\\
&2\Re(\frac{1}{w-\bar w}(q\bar w;q^2)_{L+1}(qw;q^2)_L)=
2\Re(\frac{(q\bar w,qw;q^2)_L}{w-\bar w}(1-q\bar w q^{2L}))=\\
&(q\bar w,qw;q^2)_L q^{1+2L}=\frac{(q^3;q^6)_L}{(q;q^2)_L}q^{1+2L},
\end{split}
\label{}
\end{equation}
where $\tilde q=q^2$.
\end{proof}
\noindent
Applying Theorem~\ref{T1.1} with $a=1$ and $q\rightarrow q^2$ to \eqref{SL(125)} $v$-times we obtain
\begin{equation}
\sum_{n_1,\ldots,n_v\geq 0} \frac{\tilde q^{\sum_{i=1}^v N_i(N_i+1)+n_v}} {(\tilde q)_{n_1\ldots}(\tilde q)_{n_{v-1}}(\tilde q)_{2n_v+1}}
\frac{(q^3;q^6)_{n_v}}{(q;q^2)_{n_v}} \frac{(\tilde q)_{2L+1}}{(\tilde q)_{L-N_1}}=
\sum_{j=-\infty}^\infty (-1)^j q^{(2v+1)j^2+2vj-1} \jacobi{j}{3} {2L+1\brack L-j}_{q^2}.
\label{}
\end{equation}
As $L\rightarrow\infty$ we get with the aid of \eqref{qpi}
\begin{equation}
\sum_{n_1,\ldots,n_v\geq 0} \frac{\tilde q^{\sum_{i=1}^v N_i(N_i+1)+n_v}} {(\tilde q)_{n_1\ldots}(\tilde q)_{n_{v-1}}(\tilde q)_{2n_v+1}}
\frac{(q^3;q^6)_{n_v}}{(q;q^2)_{n_v}}=\frac{Q(q^{12v+6},q^{4v+3})}{(\tilde q)_\infty}.
\label{}
\end{equation}
Case $v=1$ yields item (125) on Slater's list in \cite{S}
\begin{equation}
\sum_{n\geq 0} \frac{\tilde q^{n(n+2)}}{(\tilde q)_{2n+1}}\frac{(q^3;q^6)_n}{(q;q^2)_n}= \frac{Q(q^{18},q^7)}{(\tilde q)_\infty},
\label{}
\end{equation}
where $\tilde q=q^2$.

\section{Some additional identities}\label{Sec:3}

\hskip 0.05in

In this section, we present three new polynomial identities. The proof of these identities is left as an exercise for a motivated reader.
\begin{equation}
\sum_{j=-\infty}^\infty (-1)^j \jacobi{j+1}{3} q^{j\choose 2} {2L+1\brack L-j}_q=
\left\{\begin{array}{ll}\frac{(q^3;q^3)_{L-1}}{(q)_{L-1}} (2+q^L(1+q)-q^{2L+1}), & L>0 \\ 1, & L=0\end{array}\right..
\label{y}
\end{equation}
Using Theorem~\ref{T1.1} with $a=1$ and $L=\infty$, we get with the aid of Theorem~\ref{T1.6}
\begin{equation}
1+\sum_{r\geq 1}\frac{q^{r^2+r}}{(q^2;q)_{2r}}\frac{(q^3;q^3)_{r-1}}{(q)_{r-1}} (2+q^r(1+q)-q^{1+2r})= 
\frac{(q^9,-q^2,-q^7;q^9)_\infty(q^5,q^{13};q^{18})_\infty}{(q^2;q)_\infty}.
\label{}
\end{equation}

Next,
\begin{equation}
\sum_{j=-\infty}^\infty (-1)^j \jacobi{j}{3} q^{j\choose 2} {2L+1\brack L-j}_q=
\left\{\begin{array}{ll}\frac{(q^3;q^3)_{L-1}}{(q)_{L-1}} (-1+q^L(1+q)+2q^{2L+1}), & L>0 \\ q, & L=0\end{array}\right..
\label{z}
\end{equation}
Using Theorem~\ref{T1.1} with $a=1$ and $L=\infty$, we obtain with the aid of Theorem~\ref{T1.6}
\begin{equation}
1+\sum_{r\geq 1}\frac{q^{r^2+r-1}}{(q^2;q)_{2r}} \cdot \frac{(q^3;q^3)_{r-1}}{(q)_{r-1}} \cdot (-1+q^r(1+q)+2q^{2r+1})= 
\frac{(q^9,-q^4,-q^5;q^9)_\infty(q,q^{17};q^{18})_\infty}{(q^2;q)_\infty}.
\label{}
\end{equation}

Lastly, adding \eqref{y} and \eqref{z}, and observing that 
\begin{equation*}
\jacobi{j}{3}+\jacobi{j+1}{3}+\jacobi{j+2}{3}=0,
\label{}
\end{equation*}
we get
\begin{equation}
\sum_{j=-\infty}^\infty (-1)^{j+1} \jacobi{j+2}{3} q^{j\choose 2} {2L+1\brack L-j}_q=
\left\{\begin{array}{ll}\frac{(q^3;q^3)_{L-1}}{(q)_{L-1}} (1+q^{2L+1}+2q^L(1+q)), & L>0 \\ 1+q, & L=0\end{array}\right..
\label{xy}
\end{equation}
We divide \eqref{xy} by $(1+q)$, and then use Theorem~\ref{T1.1} with $a=1$ to obtain, as $L\rightarrow\infty$ 
\begin{equation}
1+\sum_{r\geq 1}\frac{q^{r^2+r}}{(q^2;q)_{2r}} \frac{(q^3;q^3)_{r-1}}{(q)_{r-1}} (\frac{1+q^{1+2r}}{1+q}+2q^r)= 
\frac{(q^9,-q^8,-q^{10};q^9)_\infty(q^7,q^{11};q^{18})_\infty}{(q^2;q)_\infty}.
\label{w}
\end{equation}

\section*{Acknowledgements}

The author would like to thank George E. Andrews, Aritram Dhar, and Ali K. Uncu for their kind interest.

\end{document}